\def\MR#1{\href{http://www.ams.org/mathscinet-getitem?mr=#1}{MR#1}}     
\newtheorem{theorem}{Theorem}[section]
\newtheorem{remark}[theorem]{Remark}
\newtheorem{definition}[theorem]{Definition}
\begin{document}

\title{ Local $L^{p}$-solution for semilinear  heat equation with  fractional   noise.}

\author{Jorge Clarke\thanks{CEREMADE, UMR CNRS 7534 Université Paris-Dauphine, PSL Research university,
Place du Maréchal de Lattre de Tassigny 75016 Paris, FRANCE. \newline
e-mail: \textsl{clarkemove@gmail.com.}}, 
Christian Olivera\thanks{Departamento de Matem\'{a}tica, IMECC, Universidade Estadual de Campinas, Brazil. 
\newline
e-mail: \textsl{colivera@ime.unicamp.br}. }}

\date{}

\maketitle

\textit{Key words and phrases:
Stochastic partial differential equation,  Heat equation, mild solution, fractional Brownian motion, cylindrical fractional Brownian motion, unbounded coefficients.}

\vspace{0.3cm} \noindent {\bf MSC2010 subject classification:} 60H15, 60H30, 35R60, 35K05, 35K10
35K58. 

%
\begin{abstract}
We study the $L^{p}$-solutions for the  semilinear heat equation with unbounded coefficients and driven by a infinite dimensional fractional Brownian motion with self-similarity parameter $H > 1/2$. Existence and uniqueness of local mild solutions are showed. 
\end{abstract}

%
\maketitle

%

\section {Introduction} \label{Intro}

\quad The fractional Brownian motion, referred to as fBm in the sequel, due to its desirable properties of self-similarity and long-range dependence (among other features), has become quite a relevant stochastic process for mathematical modeling in engineering, mathematical finances, and natural sciences, to mention just a few. It was first introduced by Kolmogorov in \cite{Kolm}, and later, the work of Mandelbrot and Van-Ness \cite{Mandel-1968} became a corner-stone that attracted the attention of researchers in the probabilistic community to this challenging object.

\quad Nowadays, the study of ordinary and partial stochastic differential equations driven by a fractional noise is a very dynamic research topic, motivated by purely theoretical reasons and also by its variety of applications in the mathematical modeling of phenomena in physics, biology, hydrology, and other sciences. Besides, a special interest in the study of the existence and uniqueness of solutions to semilinear parabolic stochastic differential equations driven by an infinite-dimensional fractional noise has been recently developed (see for instance, T. E. Duncan, B. Pasik-Duncan and  B. Maslowski \cite{Dun}; D. Nualart and P. A. Vuillermot \cite{N2}; B. Maslowski and B.Schmalfuss \cite{Mas}, and M. Sanz-Sole and  P. A. Vuillermot \cite{Sans}, and the references therein).

\quad Other kind of driving noises have been also considered. In \cite{Bre},  Z. Brzezniak,  J. Neerven, D. Salopek, studied evolution equations with Liouville fractional Brownian motion; equations driven by Hermite or Rosenblatt process were adressed by S.  Bonaccorsi and C. Tudor in \cite{Bona}, and C. Tudor in \cite{Tudor}. More recently, equations driven by Volterra noises were analysed  by  P.  Coupek, B. Maslowski in  \cite{Co2} and by P.  Coupek, B. Maslowski, and M. Ondrejat in \cite{Co}. 

\quad In difference with the present manuscript, all the mentioned works consider a non-linearity $F$ that is bounded or with linear growth.

\quad Analogously to deterministic partial differential equations, the first obstacle is the requirement of deciding which kind of solution concept will be considered, due to the variety of alternatives to choose. We address the study of existence and uniqueness of mild-solution to the initial value problem for the semilinear heat equation over a smoothly bounded open domain $U \subset \mathbb{R}^{d}$, 

\begin{equation}\label{para}
 \left \{
\begin{aligned}
    & \partial_t u( t ) = \Delta u( t ) + F( u(t) ) +  \partial_{t} B^{H} (t), \quad t \in [0 , T] \\
    & u|_{t=0}=  u_{0}, 
\end{aligned}
\right .
\end{equation}
\quad In (\ref{para}),  $F$  represents the nonlinear part of the equation, $u_{0} \in L^{p}(U)$, and the random forcing field  $B^{H}$ is  a Hilbert space-valued  fractional Brownian motion defined on some complete probability space $(\Omega, \mathcal{F}, \mathbb{P})$.

\quad In this manuscript, the existence and uniqueness of local $L^{p}$-solutions for the stochastic parabolic equation (\ref{para}) with unbounded parameter $F$ and  $B^{H}$ a cylindrical fractional Brownian motion with selfsimilarity parameter $H > 1/2$, is proved. The approach to study $L^{p}$-solutions  is based in the concept of mild solution, which can be obtained by rewriting (\ref{para}) as an integral equation,

\[
u(t)= S(t)u_{0}+  \ \int_{0}^{t} S(t-s) F ( u(s) ) \,  ds +  \int_{0}^{t} S(t-s) dB^{H}(s), 
\]

and then  proving that,  in a suitable  function  space,  the  right-hand side  defines  a contraction. 

\quad Results on the existence of mild solutions with values in $L^{p}$ were established by  T. Giga  in \cite{Giga2},
A. Mazzucato in \cite{Mazu}, and F.B. Weissler  in \cite{Weis} and  \cite{Weis2} for the deterministic setting. 

\quad The rest of the manuscript is fashioned as follows. In Section \ref{Pre} the basic concepts, hypothesis and tools are introduced. The results are presented in Section \ref{Results}. The Appendix, Section \ref{Appendix}, contains the basic elements on how to extend the results to a more general setting.


\section{Preliminaries} \label{Pre}

\quad Hypothesis, background and some useful notation are introduced in what follows. Let $(\Omega, \mathcal{F}, \mathbb{P})$ be a complete probability space.

\subsection{Fractional Brownian motion}

\quad Let $T > 0$ be a fixed time horizon. Recall that a one-dimensional fractional Brownian motion $(b^{H}(t))_{t \in [0 , T]}$ with Hurst  parameter $H\in(0,1)$, is a centred Gaussian  process with covariance function 
\begin{equation}
\label{cov}
\mathbb{E} \left[ b^{H} (t) b^{H} (s) \right] = R_{H}(t,s) := \frac{1}{2} ( t^{2H} + s^{2H}- \vert t-s \vert ) ^{2H}, \hskip0.5cm s,t \in [0,T].
\end{equation}
\quad The fractional Brownian motion ({\it fBm}) can also be defined as the only self-similar Gaussian process with stationary increments.

\quad Denote by ${\mathcal{H}}$ its associated canonical Hilbert space (reproducing kernel Hilbert space). If $H=\frac{1}{2}$ then $b^{\frac{1}{2}} = b$ is the
standard Brownian motion (Wiener process) and in this case ${\mathcal{H} } = L^{2}([0,T])$. Otherwise $\mathcal{H}$ is the  Hilbert space  on $[0,T]$ extending the set of indicator function $\mathbf{1}_{[0,T]}, t\in [0,T]$ by linearity and closure under the inner product
\[
\left\langle \mathbf{1}_{[0,t]} ; \mathbf{1}_{[0,s]}\right\rangle _{\mathcal{H}} = R_{H}\left( t , s \right) 
\]

\quad As the fBm is a Volterra process only for $H > 1/2$, we will focus our analysis exclusively in this case. In order to define the concept of mild-solution through convolution integrals, we need to recall the definition of  integrals with respect to the fBm. The followings facts will be needed in the sequel (we refer to \cite{N} or \cite{PT-2000} for their proofs):

$\bullet$ The fBm \index{fractional Brownian motion} admits a representation as Wiener integral \index{Wiener integral}of the form
\begin{equation} \label{BH}
b ^{H} (t) = \int_{0}^{t}K_{H} (t,s) db (s), 
\end{equation}
where $b = \{b (t), t\in [0, T] \}$ is a Wiener process, and $K_{ H}(t,s)$ is the kernel



\begin{equation}
 \label{K}
K_{H}(t,s)= c_{H} s^{\frac{1}{2}-H} \int _{s}^{t} (u-s)^{H-\frac{3}{2}} u^{H-\frac{1}{2}}  du
 \end{equation}
 where $t>s$ and $c_{H} =\left( \frac{ H(2H - 1) }{\beta( 2-2H, H-\frac{1}{2}) } \right)
^{\frac{1}{2}}.$

$\bullet$ For every $s < T$, consider the operator $K_{H}^{*} : \mathcal{H} \mapsto L^{2}([0, T])$, defined by 



\begin{equation} \label{Transfer_Op}
\displaystyle
	( K_{H}^{*} \, 	\phi )(s) = \int_{s}^{T} \phi(t) (s) \frac{ \partial K_{H} }{ \partial t } (t , s) \, dt.
\end{equation}

\quad Notice that, $ \left(K_{H}^{*} \phi \mathbf{1}_{[0, t]} \right) (s) = K_{H} (t , s) \phi (s) \mathbf{1}_{[0, t]} (s)  $, and the operator $K_{H}^{*}$ is an isometry between $\mathcal{H}$ and $L^{2} ([0, T])$ (see \cite{AMN-1999} or \cite{N}). Hence, for every $\phi \in \mathcal{H}$ it is possible to establish the following relationship between a Wiener integral with respect to the fBm and a Wiener integral with respect to the standard Brownian motion $b$ 
\begin{equation} \label{Int_rel}
\displaystyle
	\int_{0}^{t} \phi (s) db^{H}(s) = \int_{0}^{t} \left( K_{H}^{*} \phi \right) (s) db (s),
\end{equation}
for every $t \in [0 , T]$ and $\phi \mathbf{1}_{[0 , t]} \in \mathcal{H}$ if and only if $K_{H}^{*}  \phi \in L^{2}([0 , T])$.


\quad In general, the existence of the right-hand side of (\ref{Int_rel}) requires careful justification (see \cite{N}, Section 5.1). As we will work only with Wiener integrals over Hilbert spaces, we point out that if $X$ is a Hilbert space and $f \in L^{2}([0 , T] ; X)$ is a deterministic function, then relation (\ref{Int_rel}) holds, and the right hand-side is well defined in $L^{2}(\Omega ; X)$ if $K_{H}^{*} f$ is in $L^{2}([0 , T] \times X)$.

\subsection{Cylindrical Fractional Brownian motion} \label{2.2}

%
%

\quad As in \cite{Dun} or \cite{Tindel}, we define the standard {\it cylindrical} fractional Brownian motion in $X$ as the formal series
\begin{equation}\label{Cylin-fBm}
\displaystyle
	B^{H} (t) = \sum_{n = 0} ^{\infty} e_{n} b_{n}^{H} (t),
\end{equation}
where $\{ e_{n}, n \in \mathbb{N} \}$ is a complete orthonormal basis in $X$. It is well known that the infinite series (\ref{Cylin-fBm}) does not converge in $L^{2}(\mathbb{P})$, hence $B^{H} (t)$ is not a well-defined $X$-valued random variable. Nevertheless, for every Hilbert space $X_{1}$ such that $X \hookrightarrow X_{1}$, the linear embedding is a Hilbert-Schmidt operator, therefore, the series (\ref{Cylin-fBm}) defines a $X_{1}$-valued random variable and $\{ B^{H}(t), t \geq 0 \}$ is a $X_{1}$-valued $Id$-fBm. 

\quad Following the approach for a cylindrical Brownian motion introduced in \cite{Dapra}, it is possible to define a stochastic integral of the form
\begin{equation} \label{Integral-CfBm}
\displaystyle
	\int_{0}^{T} f(t) 	dB^{H}(t),	
\end{equation}
where $ f : [0, T] \mapsto \mathcal{L} (X, Y) $ and $Y$ is another real and separable Hilbert space, and the integral (\ref{Integral-CfBm}) is a $Y$-valued random variable that is independent of the choice of $X_{1}$.

\quad Let $f$ be a deterministic function with values in $\mathcal{L}_{2} (X, Y) $, the space of Hilbert-Schmidt operators from $X$ to $Y$. 
	We consider the following assumptions on $f$.
\begin{itemize} \label{Hyp-f}
	\item[i.-] For each $x \in X$, $f( \cdot )x \in L^{p}([0 , T] ; Y)$, for $ p > 1/H $.
	
	\item[ii.-] $ \alpha_{H} \int_{0}^{T} \int_{0}^{T} | f(s) |_{\mathcal{L}_{2} (X, Y)} \, | f(t) |_{\mathcal{L}_{2} (X, Y)} |s - t|^{2H-2} ds \, dt \, < \infty $.
\end{itemize}

\quad The stochastic integral (\ref{Integral-CfBm}) is defined as
\begin{equation} \label{Def_Integral_CfBm}
\displaystyle
	\int_{0}^{t} f(s) dB^{H}(s) := \sum_{n=1}^{\infty}    \int_{0}^{t}   f(s) e_{n}  db_{n}^{H}(s)
		 = \sum_{n=1}^{\infty} \int_{0}^{t} ( K_{H}^{\ast} f e_{n}) (s)  d b_{n}(s),
\end{equation}
where $b_{n}$ is the standard Brownian motion linked to the fBm $b_{n}^{H}$ via the representation formula (\ref{BH}). Since $f e_{n} \in L^{2}([0 , T] ; Y)$ for each $n \in \mathbb{N}$, the terms in the series (\ref{Def_Integral_CfBm}) are well defined. Besides, the sequence of random variables $\left\lbrace \int_{0}^{t}   f e_{n}  db_{n}^{H} \right\rbrace$ are mutually independent (see \cite{Dun}).

\quad The series (\ref{Def_Integral_CfBm}) is finite if
\begin{equation}
\displaystyle
	\sum_{n} \| K_{H}^{*}(f e_{n}) \|_{L^{2}([0, T]; V)}^{2} \, = \, \sum_{n} \| \, \| f e_{n} \|_{\mathcal{H}}  \, \|_{V}^{2} \, < \, \infty.	
\end{equation}

\quad If we consider $X = Y = \mathcal{H}$, we have
\begin{equation}
\begin{split}
	\sum_{n=1}^{\infty} \int_{0}^{t} f(s) e_{n} db_{n}^{H}(s) =& \sum_{n=1}^{\infty} \sum_{m=1}^{\infty} e_{m}  \int_{0}^{t}   \left\langle f(s)e_{n} , e_{m} \right\rangle_{\mathcal{H}}  d b_{n}^{H}(s) \\
  =& \sum_{n=1}^{\infty} \sum_{m=1}^{\infty} e_{m}  \int_{0}^{t}  \left\langle K_{H}^{\ast} (f(s) e_{n}) , e_{m} \right\rangle_{\mathcal{H}}  d b_{n}(s) \\
  =& \sum_{n=1}^{\infty} \int_{0}^{t} K_{H}^{\ast} (f(s) e_{n}) db_{n}(s) .
\end{split} 
\end{equation}

\subsection{Semigroup}    \label{2.3}

\quad It is well known that the Laplacian $\Delta$ is the infinitesimal generator of an analytic, strongly continuous semi-group of linear operators $( S(t), t \geq 0 )$ acting on $L^{p}(U)$ and given by $S(t) = e^{- t\Delta}$. Besides, for bounded domains the following estimate holds  (see \cite{Weis3})  

\begin{equation}\label{semi}
\| S(t) u \|_{p}\leq  \frac{1}{t^{\frac{d}{2}(1/r-1/p) } } \| u \|_{r} , \,   for \, 1< r\leq p < \infty.
\end{equation}


\section{Results} \label{Results}

\quad In this section we study the parabolic problem (\ref{para}) in the space $L^{p}(U)$. The required hypothesis are introduced as well as the notion of mild-solution.

\subsection{Hypothesis}

\quad We assume that $F$ is a nonlinear mapping from $L^{p}(U)$ onto $L^{m}(U)$ such that $F(0) = 0$, and for some $\alpha> 0$ and $m=\frac{p}{1+\alpha}$, it holds the estimate 

\begin{equation}\label{h1}
\|F(u)-F(v) \|_{ m } \leq  C \|u-v\|_{ p } (  \| u \|_{ p }^{\alpha} +  \| v \|_{ p }^{\alpha}), 
\end{equation}
with $C$ a positive constant. 

\quad In addition, the initial condition satisfies 

\begin{equation}\label{h2}
u_{0}\in L^{p}(U).
\end{equation}

\quad Besides, the cylindrical fBm $B^{H}$ has selfsimilarity parameter $H > 1/2$ and

\begin{equation}\label{h3}
H > d/4, \, \, p\cdot H \geq 1, \, \,  \mbox{and} \, \,  2p > \alpha d.
\end{equation}


\subsection{Mild-solution}

\quad Within the framework of paragraph \ref{2.2} we consider $X=L^{2}(U)$, $f = S(t - \cdot )$ and the complete orthonormal basis $\left\{e_{n}\right\}_{n \in \mathbb{N}}$
of eigenfunctions of the Laplacian operator, the stochastic convolution is given by

\[
\int_{0}^{t} S(t-s) dB^{H}(s) \, = \,  \sum_{j=1}^{\infty}   \int_{0}^{t}  S(t-s)e_{j}  d\beta_{j}^{H}(s). 
\]


Consider the mild formulation of equation (\ref{para}) (see \cite{Dun})

\begin{equation}\label{mild}
u(t)= S(t)u_{0}+  \ \int_{0}^{t} S(t-s) F ( u(s) ) \,  ds +  \int_{0}^{t} S(t-s) dB^{H}(s).
 \end{equation}

\begin{definition} \label{Def-3.1} 
	A measurable function $u: \Omega\times [0,T]\mapsto L^{p}(U)$ is a mild solution of the equation (\ref{para}) if 

\begin{enumerate}
\item $u$ satisfies the mild formulation  (\ref{mild}) with probability one.

\item $u \in C([0,T], L^{p}(U) )$. 

\end{enumerate}

\end{definition}

\begin{definition} \label{Def-3.2}
	Let $T_{0}$ be a stopping time. A measurable function $u : \Omega\times [0,T]\rightarrow L^{p}(U)$
is a local mild solution of (\ref{para}) in $C([0,T_{0}], L^{p}(U))$ with stopping time $T_{0}>0$, if it satisfies Definition \ref{Def-3.1} on $[0, T_{0}]$. It is the unique local mild solution with stopping time $T_{0}$, if two solutions are modifications of each other on $[0,T_{0}]$.  
\end{definition}


\subsection{Existence}

\quad Consider the linear problem 
\begin{equation}\label{Linear_Problem}
\left \{
\begin{aligned}
    & \partial_t z(t) = \Delta z(t) +  \partial_{t} B^{H}_{t}, \quad t \in [0 , T]  \\
    & z|_{t=0} =  0,     
\end{aligned}
\right .
\end{equation}
whose mild solution is given by
\begin{equation*} \label{Linear_Sol}
z(t)=   \int_{0}^{t} S(t-s) dB^{H}(s).
\end{equation*}

\quad Denote 

\begin{equation*}
\begin{split}
K_{0} & := \max \left\lbrace \| u_{0}\|_{p} \, , \sup_{ t \in [0,T]} \| \int_{0}^{t}  S(t-s)  dB^{H}(s) \|_{p}  \right\rbrace  \\
	& = \max \left\lbrace \| u_{0}\|_{p} \, , \sup_{t \in [0,T]} \| z(t) \|_{p} \right\rbrace,
\end{split}
\end{equation*}

\begin{equation*} 
\tilde{C}(t) = \left \{
\begin{aligned}
    &  C  \ \frac{t^{1- \frac{d \alpha}{2p}}}{1- \frac{d \alpha}{2p}}  \  ( 6 K_{0} )^{\alpha},  \quad \mbox{if} \quad  \alpha \geq \frac{\ln (3)}{\ln (2)} ,   \\
    &  C  \ \frac{t^{1- \frac{d \alpha}{2p}}}{1- \frac{d \alpha}{2p}}  \  ( 3 K_{0} )^{\alpha + 1} , \quad  \mbox{if} \quad \alpha < \frac{\ln (3)}{\ln (2)} ,
\end{aligned}
\right .
\end{equation*}

and define

\begin{equation}\label{Stop_time}
T_{0}= \left \{
\begin{aligned}
    &  T,  \quad \mbox{if} \quad  \tilde{C}(T) < 1,   \\
    &  \inf \left\{ 0\leq t\leq T : \tilde{C}(t)\geq 1 \right\}, \quad  \mbox{if} \quad   \tilde{C}(T) \geq  1.  
\end{aligned}
\right .
\end{equation}

\begin{theorem}\label{exi} Assume hypothesis (\ref{h1}), (\ref{h2}), (\ref{h3}). Then there exists a  local  mild  solution $u\in C ([0,T_{0}], L^{p}( U ) )$. 
\end{theorem}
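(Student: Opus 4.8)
The plan is to solve the fixed-point equation (\ref{mild}) pathwise by the Banach contraction principle in $C([0,T_0], L^p(U))$. First I would fix a realization $\omega$ for which $K_0 = K_0(\omega) < \infty$, so that by (\ref{Stop_time}) one has $T_0(\omega) > 0$, and define the solution operator
\[
\Gamma(u)(t) = S(t)u_0 + \int_0^t S(t-s)F(u(s))\,ds + z(t), \qquad t \in [0,T_0],
\]
on the closed ball $B = \{u \in C([0,T_0],L^p(U)) : \sup_{t \le T_0}\|u(t)\|_p \le 3K_0\}$. The whole argument is deterministic once $\omega$ is frozen; randomness enters only through $z$ and through the radius $3K_0$ and the horizon $T_0$, which are finite and positive precisely on the event $\{K_0 < \infty\}$.

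The two analytic inputs are the smoothing estimate (\ref{semi}) and the structural bound on $F$. Setting $v=0$ in (\ref{h1}) and using $F(0)=0$ gives $\|F(u)\|_m \le C\|u\|_p^{1+\alpha}$. Since $m = p/(1+\alpha)$ one has $\frac{d}{2}(1/m - 1/p) = \frac{d\alpha}{2p}$, so (\ref{semi}) yields
\[
\Big\|\int_0^t S(t-s)F(u(s))\,ds\Big\|_p \le C\int_0^t (t-s)^{-\frac{d\alpha}{2p}}\|u(s)\|_p^{1+\alpha}\,ds,
\]
and the time integral converges because hypothesis (\ref{h3}) gives $2p > \alpha d$, i.e.\ $\frac{d\alpha}{2p} < 1$. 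Together with $\|S(t)u_0\|_p \le \|u_0\|_p \le K_0$ (the case $r=p$ of (\ref{semi})) and $\|z(t)\|_p \le K_0$, this controls $\|\Gamma(u)(t)\|_p$ on $B$ by $2K_0 + C(3K_0)^{1+\alpha}\frac{t^{1-d\alpha/2p}}{1-d\alpha/2p}$. For the contraction I would apply (\ref{h1}) to $F(u(s))-F(v(s))$ with the same convolution estimate, using $\|u\|_p^\alpha + \|v\|_p^\alpha \le 2(3K_0)^\alpha$ on $B$, to obtain
\[
\sup_{t \le T_0}\|\Gamma(u)(t)-\Gamma(v)(t)\|_p \le 2C(3K_0)^\alpha \frac{T_0^{1-d\alpha/2p}}{1-d\alpha/2p}\,\sup_{t\le T_0}\|u(t)-v(t)\|_p.
\]
The piecewise constant $\tilde C(t)$ is engineered to dominate simultaneously the self-map coefficient (coming from requiring $C(3K_0)^{1+\alpha}\frac{t^{1-d\alpha/2p}}{1-d\alpha/2p} \le K_0$) and the contraction coefficient above; the split at $\alpha = \ln 3/\ln 2$ merely records whether $2^\alpha 3^\alpha$ or $3^{\alpha+1}$ is the larger combinatorial factor, and (\ref{Stop_time}) guarantees $\tilde C(T_0) \le 1$. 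Hence on $[0,T_0]$ the map $\Gamma$ sends $B$ into itself and is a strict contraction, so Banach's theorem produces a unique fixed point $u \in B$ solving (\ref{mild}) with probability one; membership $u \in C([0,T_0],L^p(U))$ follows since each of the three terms defining $\Gamma(u)$ is continuous in $t$ with values in $L^p(U)$.

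The main obstacle is not the fixed-point step but the input it silently relies on: that the stochastic convolution $z(t)=\int_0^t S(t-s)\,dB^H(s)$ is a well-defined $L^p(U)$-valued process with $\sup_{t\le T}\|z(t)\|_p < \infty$ almost surely, so that $K_0 < \infty$ a.s.\ and $B$ is a nonempty complete metric space. Establishing this requires verifying the integrability conditions i.--ii.\ of Section \ref{2.2} for $f(s)=S(t-s)$ and upgrading the natural $L^2(\Omega;L^2(U))$ bound to a pathwise-continuous $L^p(U)$ statement; this is exactly where the remaining restrictions $H > d/4$ and $pH \ge 1$ in (\ref{h3}) are consumed, through the fractional (Volterra) representation (\ref{Def_Integral_CfBm}) together with the spectral decomposition of $\Delta$ and the smoothing of the heat semigroup.
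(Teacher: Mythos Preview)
Your proposal is correct and follows essentially the same route as the paper's proof: a Banach fixed-point argument on the closed ball of radius $3K_0$ in $C([0,T_0],L^p(U))$, using (\ref{semi}) and (\ref{h1}) for both the self-map and contraction estimates, with $T_0$ chosen via $\tilde C$ to make both coefficients at most $1$. The only point you flag as an obstacle---that $z\in C([0,T],L^p(U))$ almost surely under $H>d/4$, $pH\ge 1$---is exactly what the paper disposes of at the outset by invoking the $L^p$-valued stochastic convolution results of \cite{Co}.
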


\begin{proof} 

	Since $H>\frac{d}{4} $ and $ p H \geq 1 $ ,  the results in \cite{Co} allow us to conclude that the mild solution $z$ to the linear problem (\ref{Linear_Problem})
is in $C([0, T], L^{p}(U))$. Therefore, 

\[
\sup_{ t \in [0,T]} \| \int_{0}^{t}  S(t-s)  dB^{H} (s) \|_{p}<\infty. 
\]

\quad Now, in order to construct a contraction that will allow us to use a fix point argument, let's assume that $ \| u \|_{C([0,T_{0}], L^{p}(U))} := \sup_{t \in [0,T_{0}]} \| u(t) \|_{p} \leq  3 K_{0}$. Set 

\[
G[u](t) := S(t)u_{0} + \, \int_{0}^{t} S(t-s)F(u(s)) \, ds + z(t).  
\]

\quad We shall show that $ \sup_{t \in [0,T_{0}]} \| G[u](t) \|_{p}\leq 3K_{0}$. We have 

\[
 \| G[u](t) \|_{p} \leq 
 \|S(t)u_{0}\|_{p}  + \int_{0}^{t} \|S(t-s)F(u(s))\|_{p}  \ ds + \|z(t)\|_{p}.
\]

\quad As $( S(t) )_{t\geq 0}$ is a semigroup of contractions, for every $t \geq 0$  
	
\begin{equation}\label{one}
\|S(t)u_{0}\|_{p} \leq  \|u_{0}\|_{p},  
\end{equation}
	
and 
		
\begin{equation} \label{two}
\begin{split}
 \int_{0}^{t}   \|S(t-s)F(u(s))\|_{p} \, ds & \leq    \int_{0}^{t}  (t-s)^{- \frac{d \alpha }{2p}} \, \| F(u(s)) \|_{\frac{p}{\alpha + 1}}  \,  ds \\
	& \leq   C  \int_{0}^{t}    (t-s)^{- \frac{d \alpha}{2p}}  \  \| u(s) \|_{p}^{\alpha + 1} \     \ ds,
\end{split}
\end{equation}
			
where we used   (\ref{semi})  and hypothesis (\ref{h1}). 		
	
\quad From  (\ref{one}) and  (\ref{two})  we deduce that




\begin{equation*}
\begin{split}
	\| G[u](t) \|_{p}  & \leq  2  K_{0}  +  C \int_{0}^{t}  	\ (t-s)^{- \frac{d \alpha}{2 p}}  \| u(s) \|_{p}^{\alpha +  1} \, ds  \\
	& \leq 2  K_{0}  +  C \int_{0}^{t}  	\ (t-s)^{- \frac{d \alpha}{2 p}} \left( \sup_{s \in [0, T_{0}] } \| u(s) \|_{p} \right)^{\alpha +  1}
	 \, ds  \\
	 & \leq 2  K_{0}  +  C(3  K_{0})^{\alpha + 1} \frac{t^{1- \frac{d \alpha}{2 p}}}{1- \frac{d \alpha}{2p}}.
\end{split}	
\end{equation*}

Hence,

\begin{equation*}
\begin{split}
 \sup_{[0,T_{0}]} \| G[u](t) \|_{p}  & \leq   2  K_{0}  +  C(3  K_{0})^{\alpha + 1} \frac{T_{0}^{1- \frac{d \alpha}{2 p}}}{1- \frac{d \alpha}{2p}}  \\
	& = 3  K_{0} \left( \frac{2}{3} +  \   C  \frac{T_{0}^{1- \frac{d \alpha}{2 p}}}{1- \frac{d \alpha}{2p}} \ (3 K_{0})^{\alpha} \right) \,  \leq 3 K_{0},
\end{split}
\end{equation*} 

whenever 

\begin{equation}\label{Cond_1}
 C  \ \frac{T_{0}^{1- \frac{d \alpha}{2p}}}{1- \frac{d \alpha}{2p}}  \,  (3 K_{0}) ^{\alpha + 1} < 1. 
\end{equation}


\quad We shall show now that $G: X \mapsto X$ is a contraction, where $X:= \{ u \in C([0, T_{0}], L^{p}(U)) : \| u \|_{C([0,T_{0}], L^{p}(U))} \leq 3K_{0} \} $.  Let Fix  $u,v \in X$  then $t \in [0 , T_{0}]$, we have 

\begin{equation}
\begin{split}
 \| G[u](t) - G[v](t) \|_{p} & \leq 
 \int_{0}^{t} \|S(t-s) \left( F(u(t) ) - F(v(t)) \right) \|_{p} \, ds   \\
	& \leq  \int_{0}^{t} \,  (t-s)^{- \frac{d \alpha}{2p}}  \| F(u(t)) - F(v(t)) \|_\frac{p}{\alpha+1} \, ds \\
	& \leq C \, \int_{0}^{t} \,  (t-s)^{- \frac{d \alpha}{2p}}  \| u(t) - v(t) \|_{p}  \,  \left( \| u(t) \|_{p} ^{\alpha} + \| v(t) \|_{p}^{\alpha}  \right) \, ds \\
	& \leq  C  (6 K_{0})^{\alpha} \,  \frac{T_{0}^{1- \frac{d \alpha}{2p}}}{1- \frac{d \alpha}{2p}}  \,   \sup_{t \in [0,T_{0}]}\|u(t) - v(t) \|_{p} \, ds,
\end{split}
\end{equation}

where we used   (\ref{semi})  and hypothesis (\ref{h1}). Hence, if
\begin{equation}\label{Cond_2}
 C  \ \frac{T_{0}^{1- \frac{d \alpha}{2p}}}{1- \frac{d \alpha}{2p}}  \,  (6 K_{0})^{\alpha} < 1,  
\end{equation}

then

\[
 \sup_{t \in [0,T_{0}]} \| G[u](t) - G[v](t) \|_{p}  \,  <  \, \sup_{t \in [0,T_{0}]} \| u(t) - v(t) \|_{p}. 
\]

\quad Therefore, $G$ is a contraction. Hence, there exist a unique fixed point. 



\end{proof}

\subsection{Example of a non-linearity F}

\quad An example of a non-linearity F satisfying condition (\ref{h1}) is as follows. Let $f$ be a mapping from $\mathbb{R}^{d}$ to $\mathbb{R}^{d}$ verifying $f(0) = 0 $ and
\[
|f(y) - f(x)|\leq C |x-y|(|x|^{\alpha}+ |y|^{\alpha}),
\]
for $\alpha > 0$.

\quad Set $F(u)(x) = f(u(x))$, hence, by H\"older's inequality $F$ satisfies (\ref{h1}). As an especific example to construct the non-linearity $F$, we may consider the function $f(x)=x|x|^{\alpha}$.

\section{Appendix} \label{Appendix}

\subsection{Preliminary}

\quad We follow the presentation in \cite{Co}.  Let $X$ be a real separable Hilbert space and $E$ be a Banach space. A bounded operator 
$R\in \mathcal{L}(X,E)$ is $\gamma$-radonifying provided that there exists a centered Gaussian probability 
$\nu$ on $E$ such that 
\[
\int_{E} \varphi(x) \ d\nu(x)= \| R^{\ast}\varphi\|_{X} ,\; \varphi\in E^{\ast}.
\]

Such a measure is at most one; therefore we set 
\[
\| R \|_{\gamma(X,E)}^{2}:= \int_{E}  \| x\|_{E}^{2} \ d\nu (x)
\]
and denote by $\gamma(X,E)$ the space of the $\gamma$-radonifying operator. It is well-know 
that $\gamma(X,E)$ equipped with $ \|  .\|_{\gamma(X,E)}^{2}$ is a separable Banach space
(see \cite{On}).

Let $(D,\mu)$ be a measure space. $L^{p}=L^{p}(D, \mu)$
is a separable Banach space for $1\leq p< \infty$.  Denote  $B^{H}$ a $X$-cylindrical 
fractional Brownian motion. If we consider an operator 
$G \in\gamma(X,L^{p}(D; \mathcal{H}))$, the stochastic integral can be defined  as 

\[
\int_{0}^{T} G(r)dB^{H}(r) = \sum_{n} \int_{0}^{T} Ge_{n} db_{n} ^{H} (r), 
\]

for more details see  \cite{Co}.

\subsection{Semi-linear Equation}

We consider the following stochastic differential equation 

\begin{equation}\label{paraB}
 \left \{
\begin{aligned}
    & \partial_t u( t ) = A u( t ) + F( u(t) ) + \Phi \partial_{t} B^{H}_{t}, \quad t \in [0 , T] \\
    & u|_{t=0}=  u_{0}, 
\end{aligned}
\right .
\end{equation}

where $u_{0}\in L^{p}$, $A : Dom(A) \subset L^{p} \mapsto L^{p}$, is the infinitesimal generetor of an analytic  strongly continuous semigroup of linear operators $(S(t), t\geq0)$
acting on $L^{p}$, and $ \Phi\in \gamma(X,L^{p})$ .

\quad Consider the mild formulation of equation (\ref{paraB})

\begin{equation}\label{mild2}
u(t)= S(t)u_{0}+  \ \int_{0}^{t} S(t-s) F ( u(s) ) \,  ds +  \int_{0}^{t} S(t-s)\Phi dB^{H}(s).
 \end{equation}

\begin{definition} \label{Def-3.1B} 
	A measurable function $u: \Omega\times [0,T]\mapsto L^{p}$ is a mild solution of the equation (\ref{paraB}) if 

\begin{enumerate}
\item $u$ satisfies the mild formulation  (\ref{mild2}) with probability one.

\item $u \in C([0,T], L^{p} )$. 

\end{enumerate}

\end{definition}

\subsection{Hypothesis}

\quad We assume that $F$ is a nonlinear mapping from $L^{p}$ onto $L^{m}$ such that $F(0) = 0$, and for some $\alpha> 0$ and $m=\frac{p}{1+\alpha}$, it holds the estimate 

\begin{equation}\label{h1B}
\|F(u)-F(v) \|_{ m }\leq  C \|u-v\|_{ p } (  \|u\|_{ p }^{\alpha} +  \|v\|_{ p }^{\alpha}), 
\end{equation}
with $C$ a positive constant. 

\quad Besides, we assume that  $2p > \alpha d$, 

\begin{equation}\label{h2B}
u_{0}\in L^{p},
\end{equation}

and the Hurst parameter $H$ is bigger than $1/2$ and satisfies

\begin{equation}\label{h3B}
H > d/4, \, \, p\cdot H \geq 1.
\end{equation}

In addition, we assume that for $ 1< r\leq p < \infty$, $\lambda \in [0, H)$, for all  $u\in L^{p}$  and $S(t)\Phi\in \gamma(X,L^{p})$,

\begin{equation}\label{semiB}
\| S(t) u \|_{p}  \leq  t^{- \frac{d}{2}(1/r-1/p) }  \| u \|_{r} , \,  
\end{equation}

and

\begin{equation}\label{semiB2}
\|S(t)\Phi \|_{\gamma(X,L^{p})}\leq  t^{-\lambda},
\end{equation}

\begin{remark} According to Corollary 4.3 in \cite{Co} we deduce that 

\[
\sup_{ t \in [0,T]}\|\int_{0}^{T}  S(t-s)\Phi  dB^{H}\|_{p}<\infty. 
\]

\end{remark}

\subsection{Result}

\begin{theorem} Assume hypothesis (\ref{h1B}), (\ref{h2B}), (\ref{h3B}), 
(\ref{semiB}), (\ref{semiB2}). Then there exists  a unique   local  mild  solution $u\in C ([0,T_{0}], L^{p})$. 
\end{theorem}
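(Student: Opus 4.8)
The plan is to reproduce the fixed-point scheme of Theorem \ref{exi} in the present operator-theoretic setting, with the two probabilistic inputs now supplied by the $\gamma$-radonifying framework and the operator estimate (\ref{semiB2}) in place of the scalar heat-kernel bounds. First I would invoke the Remark preceding the statement: by Corollary 4.3 in \cite{Co}, the conditions $S(t)\Phi \in \gamma(X,L^p)$, the decay $\lambda < H$ in (\ref{semiB2}), and $H > d/4$, $pH \geq 1$ from (\ref{h3B}) together guarantee that the stochastic convolution $z(t) := \int_0^t S(t-s)\Phi\,dB^H(s)$ is an $L^p$-valued process with $\sup_{t\in[0,T]}\|z(t)\|_p < \infty$. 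This lets me set $K_0 := \max\{\|u_0\|_p,\ \sup_t \|z(t)\|_p\}$, define the map $G[u](t) := S(t)u_0 + \int_0^t S(t-s)F(u(s))\,ds + z(t)$, and work on the closed ball $X := \{u \in C([0,T_0],L^p) : \|u\|_{C([0,T_0],L^p)} \leq 3K_0\}$, exactly as in Theorem \ref{exi}.

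Next I would verify the self-mapping property $G(X)\subseteq X$. The linear term is handled by (\ref{semiB}) with $r=p$: the exponent $-\tfrac{d}{2}(1/p-1/p)$ vanishes, giving $\|S(t)u_0\|_p \leq \|u_0\|_p \leq K_0$, while the stochastic term contributes $\|z(t)\|_p \leq K_0$. For the nonlinear term the crucial move is to apply (\ref{semiB}) with $r = m = p/(1+\alpha)$, so that $1/m - 1/p = \alpha/p$ and the smoothing kernel becomes $(t-s)^{-d\alpha/2p}$; combined with $F(0)=0$ and (\ref{h1B}) this yields $\|S(t-s)F(u(s))\|_p \leq C(t-s)^{-d\alpha/2p}\|u(s)\|_p^{\alpha+1}$. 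The singular kernel is integrable precisely because the standing hypothesis $2p > \alpha d$ forces $d\alpha/2p < 1$, and bounding $\|u(s)\|_p \leq 3K_0$ over $X$ produces $\|G[u](t)\|_p \leq 2K_0 + C(3K_0)^{\alpha+1} T_0^{1-d\alpha/2p}/(1-d\alpha/2p)$. The stopping time (\ref{Stop_time}) is chosen so that condition (\ref{Cond_1}) holds, making the right-hand side at most $3K_0$.

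Then I would show $G$ is a contraction on $X$. Subtracting, using (\ref{semiB}) with $r=m$ and then (\ref{h1B}) gives
\[
\|G[u](t) - G[v](t)\|_p \leq C \int_0^t (t-s)^{-d\alpha/2p}\,\|u(s)-v(s)\|_p\,\bigl(\|u(s)\|_p^\alpha + \|v(s)\|_p^\alpha\bigr)\,ds.
\]
Bounding the growth factor on $X$ by $\|u(s)\|_p^\alpha + \|v(s)\|_p^\alpha \leq 2(3K_0)^\alpha$ and integrating the kernel produces a Lipschitz constant of the form $C\,K_0^{\bullet}\,T_0^{1-d\alpha/2p}/(1-d\alpha/2p)$; the two regimes $\alpha \geq \ln 3/\ln 2$ and $\alpha < \ln 3/\ln 2$ in the definition of $\tilde C$ merely record which of the self-map constant $(3K_0)^{\alpha+1}$ and the contraction constant $(6K_0)^\alpha$ dominates (equivalently, whether $6^\alpha$ or $3^{\alpha+1}$ is larger), so that $\tilde C(T_0)<1$ simultaneously enforces (\ref{Cond_1}) and (\ref{Cond_2}). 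The Banach fixed-point theorem then delivers a unique $u\in X$ with $G[u]=u$, i.e.\ a local mild solution in $C([0,T_0],L^p)$; uniqueness within $X$ is immediate, and uniqueness among all continuous $L^p$-valued solutions in the sense of Definition \ref{Def-3.2} follows from a weakly singular Gr\"onwall estimate applied to $\|u(t)-v(t)\|_p$, again using (\ref{h1B}) and the integrability granted by $2p>\alpha d$.

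The step I expect to be the main obstacle is the very first one: upgrading the scalar-kernel regularity of the stochastic convolution used in Theorem \ref{exi} to the general $\Phi\in\gamma(X,L^p)$ setting. Everything downstream is the deterministic contraction argument, essentially identical to the proof of Theorem \ref{exi}; the genuinely new content is that $z$ is a well-defined, uniformly bounded $L^p$-valued process, which rests entirely on the $\gamma$-radonifying estimate (\ref{semiB2}) with $\lambda<H$ and on Corollary 4.3 of \cite{Co}. One should check carefully that (\ref{h3B}) and (\ref{semiB2}) are exactly the hypotheses under which that corollary applies, since the finiteness of $K_0$ — and hence the entire fixed-point construction — depends on it.
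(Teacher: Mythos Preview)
Your proposal is correct and follows exactly the paper's own approach: the paper's proof is the single sentence ``The proof follow the same steps that the proof of theorem \ref{exi},'' and you have spelled out precisely those steps, correctly identifying that the only new input is the finiteness of $\sup_{t}\|z(t)\|_{p}$ supplied by the Remark (Corollary~4.3 of \cite{Co}) under hypotheses (\ref{h3B}) and (\ref{semiB2}). Your additional Gr\"onwall remark for uniqueness outside the ball $X$ goes slightly beyond what the paper records, but is consistent with its framework.
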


\begin{proof}

The proof follow the same steps that the proof of theorem \ref{exi}. 

\end{proof}

\section*{Acknowledgements}

Christian Olivera  is partially supported by FAPESP 		by the grants 2017/17670-0 and 2015/07278-0.
Also supported by CNPq by the grant
		426747/2018-6.


\end{document}